\newtheorem{thm}{Theorem}[section]
\newtheorem{lem}[thm]{Lemma}
\newtheorem{prop}[thm]{Proposition}
\newtheorem{ques}[thm]{Question}
\begin{document}

\pagenumbering{arabic} \setcounter{page}{1}

\title{Sum of many dilates}

\author{George Shakan\\
University of Wyoming Department of Mathematics \\
Laramie, Wyoming 82072, USA \\
george.shakan@gmail.com}

\maketitle


\begin{abstract}
We show that for any coprime integers $\lambda_1 , \ldots , \lambda_k$ and any finite $A \subset \mathbb{Z}$, one has $$|\lambda_1 \cdot A + \ldots + \lambda_k \cdot A| \geq (|\lambda_1| + \ldots + |\lambda_k|)|A|- C,$$ where $C$ only depends on $\lambda_1 , \ldots , \lambda_k$. 
\end{abstract}
\section{Introduction}

Let $A$ and $B$ be finite sets of real numbers. The sumset of $A$ and $B$ is defined by $$A + B = \{a + b : a \in A, \ b \in B \}.$$ For a real number $d \neq 0$ the dilation of $A$ by $d$ is defined by 
$$d \cdot A = \{d\} \cdot A =  \{da : a \in A\},$$ while for any real number $x$, the translation of $A$ by $x$ is defined by 
$$x + A = \{x\} + A = \{x + a : a \in A\}.$$ Our main theorem is the following. 
\begin{thm}\label{main}
	Let $A$ be a finite subset of the integers and $\lambda_1 , \ldots, \lambda_k$ be coprime integers. Then one has $$|\lambda_1 \cdot A + \ldots + \lambda_k \cdot A| \geq (|\lambda_1| + \ldots + |\lambda_k|) |A| - C,$$ where $C$ only depends on $\lambda_1 , \ldots , \lambda_k$. 
\end{thm}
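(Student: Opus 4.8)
\medskip
\noindent\textbf{Proof proposal.} I would use a double induction — on $k$, and, for fixed $k$, on $\operatorname{diam}(A):=\max A-\min A$ — after a few reductions. We may assume every $\lambda_i\neq0$ and, absorbing signs into $\pm A$ inside each term, work with $|\lambda_i|$ in place of $\lambda_i$ throughout the combinatorics. Translating $A$ merely translates $\lambda_1\cdot A+\dots+\lambda_k\cdot A$, so assume $\min A=0$; and if all of $A$ lies in a single class $a_0$ modulo some $d>1$, then $A=a_0+d\cdot A'$ gives $\lambda_1\cdot A+\dots+\lambda_k\cdot A=(\textstyle\sum\lambda_i)a_0+d\cdot(\lambda_1\cdot A'+\dots+\lambda_k\cdot A')$ with $|A'|=|A|$, so we may assume $A$ is \emph{primitive}: $\gcd\{a-a':a,a'\in A\}=1$. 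Since trivially $|\lambda_1\cdot A+\dots+\lambda_k\cdot A|\ge|A|$, the claimed bound is automatic once $(L-1)|A|\le C$ with $L:=|\lambda_1|+\dots+|\lambda_k|$, so we may also take $|A|$, equivalently $\operatorname{diam}(A)$, as large as we wish in terms of the $\lambda_i$. The case $k=1$ is trivial since coprimality forces $\lambda_1=\pm1$. Fix $k\ge2$, assume the theorem for $k-1$ dilates, and put $\Lambda:=\operatorname{lcm}(|\lambda_1|,\dots,|\lambda_k|)$.

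\medskip
\noindent\emph{Equidistributed case: $A$ meets every class modulo $\Lambda$.} Peel off $\lambda_k$ and set $d=\gcd(\lambda_1,\dots,\lambda_{k-1})$, so $\gcd(d,\lambda_k)=1$ and $\lambda_1\cdot A+\dots+\lambda_{k-1}\cdot A=d\cdot B'$, where $B'=\tfrac{\lambda_1}{d}\cdot A+\dots+\tfrac{\lambda_{k-1}}{d}\cdot A$ is a sum of $k-1$ coprime dilates; hence $|B'|\ge\tfrac{L-|\lambda_k|}{d}|A|-C'$ by induction. Fibre $\mathbb Z$ over $\mathbb Z/(d|\lambda_k|)$: by the Chinese Remainder Theorem the class of $dt+\lambda_k a$ is determined bijectively by the pair $(t\bmod|\lambda_k|,\,a\bmod d)$, and a direct check shows each nonempty fibre of $S:=d\cdot B'+\lambda_k\cdot A$ is a translate of $(d|\lambda_k|)\cdot(X+Y)$ for suitable $X,Y\subset\mathbb Z$ (the rescaled slices of $B'$ and $A$ in the relevant classes), hence has at least $|X|+|Y|-1$ elements. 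Summing over the $\alpha\beta$ nonempty fibres — $\alpha$ the number of classes of $A$ mod $d$, $\beta$ that of $B'$ mod $|\lambda_k|$ — gives $|S|\ge\alpha|B'|+\beta|A|-\alpha\beta$. Equidistribution forces $\alpha=d$, and since $\gcd(\tfrac{\lambda_1}{d},\dots,\tfrac{\lambda_{k-1}}{d})=1$ one checks $B'$ is also equidistributed mod $|\lambda_k|$, so $\beta=|\lambda_k|$. Hence $|S|\ge d|B'|+|\lambda_k||A|-d|\lambda_k|\ge L|A|-(dC'+d|\lambda_k|)$, which is the theorem.

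\medskip
\noindent\emph{Degenerate case — the main obstacle.} Here $A$ misses a class modulo $\Lambda$; by CRT we may assume $|A\bmod p|<p$ for some prime $p\mid\Lambda$ (prime-power refinements being similar). Write $A=\bigsqcup_{r\in R}(r+p\cdot C_r)$ with $R=A\bmod p$, $|R|=s\ge2$ (by primitivity), $\sum_r|C_r|=|A|$, $\operatorname{diam}(C_r)\le\operatorname{diam}(A)/p$. If $p$ can be chosen coprime to exactly one $\lambda_{i_1}$, the reduction is clean: $S$ lives in the same $s$ classes mod $p$, and factoring $p$ out of the class forced on the $i_1$-coordinate (all other dilates being divisible by $p$) shows the fibre over $\rho\in R$ contains a translate of $p\cdot(\lambda_1\cdot C_\rho+\dots+\lambda_k\cdot C_\rho)$; the induction on $\operatorname{diam}(A)$ then yields $|S|\ge\sum_{\rho}(L|C_\rho|-C'')=L|A|-sC''$. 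The real difficulties are: (i) a bad prime $p$ need not be coprime to exactly one $\lambda_i$ (e.g.\ $\lambda=(2,4,3)$ with $A$ spread over two classes mod $4$), and then $S$ can be equidistributed mod $p$ even though $A$ is not; and (ii) the constant $C''$ is itself the quantity $C$ we are bounding, so one cannot afford a descent of unbounded depth — one must instead exploit that degenerate $A$ have bounded \emph{total} deficit $L|A|-|S|$ (the sumset becomes relatively denser on each slice), typically by invoking a Freiman $3k-4$ type structure theorem to reduce to $A$ contained in a short arithmetic progression — for which $|\lambda_1\cdot A+\dots+\lambda_k\cdot A|$ is computed directly, $C$ being essentially twice the genus of the numerical semigroup $\langle|\lambda_1|,\dots,|\lambda_k|\rangle$ — and bounding the loss incurred in that reduction. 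Completing this degenerate analysis while keeping the final $C$ uniform in $A$ is where I expect the main work to lie.
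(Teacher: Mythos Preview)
Your proposal is incomplete: the degenerate case is explicitly left open, and neither route you sketch will close it. The induction on $\operatorname{diam}(A)$ runs into exactly the circularity you name in~(ii): slicing $A$ into $s\ge2$ residue classes and recursing gives $L|A|-sC$, so the constant degrades geometrically along a descent whose depth is unbounded in $A$. Invoking a Freiman $3k-4$ type theorem does not help either, since such results take a bound of the form $|S|\le L|A|+O(1)$ as a \emph{hypothesis}, whereas here that is the conclusion you want. Your equidistributed case is correct and parallel to the paper's Lemma~2.2, though the paper treats it more symmetrically via the Gyarmati--Matolcsi--Ruzsa inequality $|A_1+\dots+A_k|\ge\frac{1}{k-1}\bigl(\sum_i|A_1+\dots+\widehat{A_i}+\dots+A_k|-1\bigr)$, dropping each $\lambda_i$ in turn and invoking the induction on $k$; the relevant moduli are the $g_i=\gcd(\lambda_j:j\ne i)$ rather than your $\Lambda$.

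The device that replaces any structural theorem, and that you are missing, is a \emph{bootstrap on the multiplicative constant}. Writing $L=\sum_i|\lambda_i|$, the paper proves by induction on an integer parameter $u$ in the range $L\le u\le L^2$ that $|\lambda_1\cdot A+\dots+\lambda_k\cdot A|\ge\frac{u}{L}|A|-C_u$ holds for \emph{every} finite $A$; the base $u=L$ is trivial and $u=L^2$ is the theorem. For the step $u\mapsto u+1$ one argues by cases on the residue structure of $A$ modulo the $g_i$. If some class $A_{ij}$ satisfies $|A_{ij}|\le|A|/L$, split it off and apply the $u$-bound to $A\setminus A_{ij}$. Otherwise all classes are large, and a short pigeonhole argument (the paper's Lemmas~2.3--2.4) shows that either one gains at least $|A|/L$ over the $u$-bound applied to a suitable subset, or one lands in a block on which the equidistributed lemma already delivers the full constant $L$. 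In every branch the multiplicative constant improves by $1/L$, with $C_{u+1}$ a bounded multiple of $C_u$; since there are only $L^2-L$ steps, the final constant depends on the $\lambda_i$ alone. Inducting on the \emph{strength of the inequality} rather than on $A$ is precisely what breaks the circularity you diagnosed.
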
 

We show that one may take $C =\frac{1}{3} {k+1 \choose 2} |\lambda_1 \cdots \lambda_k|^{(k-1)(|\lambda_1| + \ldots + |\lambda_k|)^2 + k-1}  $. 

Taking $X = \{0 , \ldots , |X| - 1\}$, one may check that $$ \lambda_1 \cdot X + \ldots + \lambda_k \cdot X \subset \{0 \ldots , (|\lambda_1| + \ldots + |\lambda_k|)(| X| - 1) \} + (|X| - 1) \sum_{\{i : \lambda_i < 0\}} \lambda_i ,$$ and so $$|\lambda_1 \cdot X + \ldots + \lambda_k \cdot X| \leq (|\lambda_1| + \ldots + |\lambda_k|) |X| - (|\lambda_1| + \ldots + |\lambda_k| - 1) .$$ This shows that Theorem \ref{main} is the best possible up to the additive constant. 

In \cite{Bu} it was shown that \begin{equation} |\lambda_1 \cdot A + \ldots + \lambda_k \cdot A| \geq (|\lambda_1| + \ldots + |\lambda_k|)|A| - o(|A|) \label{Bukh}, \end{equation} where $o(|A|)$ depends on $\lambda_1 , \ldots , \lambda_k$. The works of \cite{Ba, Ci, Du, Ha, Lj} made improvements from $o(|A|)$ to a in certain cases, all when $k = 2$. Indeed, in \cite{Ba}, the problem was completely resolved for $k=2$. For a more complete introduction of the problem, the reader is invited to see the introductions of \cite{Ba} and \cite{Bu}. Note that while Theorem \ref{main} is only claimed and proved for $A \subset \mathbb{Z}$, extending to $A \subset \mathbb{Q}$ is an obvious task by clearing denominators of $A$; and moreover, extending to $A \subset \mathbb{R}$ is handled by Lemma 5.25 in the book of Tao and Vu \cite{Ta}. 

Before we discuss the proof of Theorem \ref{main}, we mention two interesting problems related to the current topic. They are to estimate (from below) $|A + q \cdot A|$ where $q$ is an algebraic number and $A \subset \mathbb{Z}[q]$ and to estimate $|A + \pi \cdot A|$ where $\pi$ is any transcendental number and $A \subset \mathbb{R}$. 

Let $q$ be an algebraic number and $A \subset \mathbb{Z}[q]$ be finite. Finding good lower bounds for $|A+q \cdot A|$ is related to convex geometry. Let $d$ be the degree of the minimal polynomial of $q$ over $\mathbb{Q}$. Consider $L : \mathbb{Z}[q] \to \mathbb{Z}[q]$ via $x \mapsto qx$. Then $L$ is a $d$-dimensional linear transformation over $\mathbb{Q}$. The problem of finding bounds for $|A+q\cdot A|$ where $A \subset \mathbb{Z}[q]$ is equivalent to the problem of finding bounds for $|A + L A|$ where $A \subset \mathbb{Z}^d$. To this end, we are interested in lower bounds for $$|L_1  A + \ldots + L_k  A|,$$ where $L_i : \mathbb{Z}^d \to \mathbb{Z}^d$ are linear transformations for $1 \leq i \leq k$, $$L_1 \mathbb{Z}^d + \ldots +  L_k \mathbb{Z}^d = \mathbb{Z}^d,$$ and $L_1 , \ldots , L_k$ have no nontrivial common invariant subspace. These last two conditions generalize the necessary condition of coprimality for $d=1$.

Recall that the Brunn-Minkowski inequality states that if $X$ and $Y$ are nonempty bounded open subsets of $\mathbb{R}^d$, then $\mu(X+Y)^{1/d} \geq \mu(X)^{1/d} + \mu(Y)^{1/d}$ where $\mu$ is the Lebesgue measure on $\mathbb{R}^d$. Let $X$ be the open set formed by taking the union of all the unit boxes around each point of $A$. Note that $\mu(X) = |A|$. Now using that $L_1 \mathbb{Z}^d + \ldots  + L_k\mathbb{Z}^d = \mathbb{Z}^d$ one can observe that all of the integers points that intersect $L_1  X + \ldots + L_k  X$ and are not close to the perimeter lie in $L_1  A + \ldots + L_k  A$. Thus $\mu(L_1  X + \ldots + L_k  X)$ approximates $|L_1  A + \ldots + L_k  A|$ up to an error term that depends on the structure of $A$. This observation, coupled with our intuition that to minimize $|L_1  A + \ldots + L_k  A|$, one might expect $A$ to be a convex set intersected with $\mathbb{Z}^d$, we obtain the following question.

\begin{ques} Suppose $A$ is a finite subset of $\mathbb{Z}^d$ and $L_1 , \ldots , L_k: \mathbb{Z}^d \to \mathbb{Z}^d$ are linear transformations that have no common invariant subspace such that $L_1\mathbb{Z}^d + \ldots + L_k \mathbb{Z}^d = \mathbb{Z}^d$. Is it true that $$|L_1  A + \ldots + L_k  A| \geq (\det(L_1)^{1/d} + \ldots + \det(L_k)^{1/d})^d|A| - o(|A|)?$$ \end{ques} We expect the error term may be taken to be $O(|A|^{(d-1)/d})$, but any progress in this direction would be exciting to see. The author would like to thank Boris Bukh for showing him the above question as well as providing invaluable discussion in regards to Theorem \ref{main}.

The other problem is to estimate $|A+ \pi \cdot A|$ from below where $\pi$ is transcendental and $A \subset \mathbb{R}$. The growth is no longer linear in $|A|$, as it was shown in Corollary 3.7 of \cite{Ko} that $|A+ \pi \cdot A| \geq \Omega(|A| \frac{\log{|A|}}{\log{\log{|A|}}})$. The author guesses that the best one can do to minimize $|A+ \pi \cdot A|$ is to take a set of the form $$X = \{a_0 + a_1 \pi + a_2 \pi_2 + \ldots + a_n \pi^n : 0 \leq a_i < N_i,\ \ a_i \in \mathbb{Z} \},$$ where $n = \lfloor \sqrt{\log|X|} \rfloor$ and the $N_i$ differ by at most 1, giving $|X+\pi\cdot X| \leq 2^{O(\sqrt{\log|X|})}|X|$.

We would like to take a slight detour and discuss the analogous problem for finite fields of a prime order. The problem appears to be harder and much less is known. The only exact result in this direction is the celebrated Cauchy-Davenport theorem. Even the behavior of $|A + 2 \cdot A|$ when $A \subset \mathbb{F}_p$ is unclear. Both \cite{Pl} and \cite{Po} have partial results but the general question remains open. Constructions in \cite{Po} show that when $A \subset \mathbb{F}_p$ is large with respect to $p$, the size of $\lambda_1 \cdot A + \ldots + \lambda_k \cdot A$ can be smaller than what would be expected from the case where $A \subset \mathbb{Z}$. We remark that one can extend Theorem \ref{main} to very small subsets of $\mathbb{Z}_p$ by an argument similar to that of Theorem 1.3 in \cite{Po}.

\section{Proof of Theorem \ref{main}}

Let $\lambda_1 , \ldots , \lambda_k$ be coprime integers. Let $A$ be a finite subset of the integers. Throughout we assume Theorem \ref{main} has been shown for $k-1$ and we aim to show it holds for $\lambda_1 , \ldots , \lambda_k$. The case $k=1$ starts the induction, where we observe that the assumptions of Theorem \ref{main} imply $\lambda_1 = \pm 1$. We remark that we need the main ideas from \cite{Ba} to proceed with the proof of Theorem \ref{main}. Indeed, for $k=2$, the current proof and the proof from \cite{Ba} are nearly identical. 

Recall the well-known result that for any finite and nonempty subsets of the integers, $A$ and $B$, one has \begin{equation}\label{trivial} |A+B| \geq |A| + |B| - 1. \end{equation}
The following result generalizes this in a way which will prove to be quite useful.
\begin{lem}\label{GYM}[Gyarmati, Matolcsi, Ruzsa]\cite{Gy}
	Let $A_1 , \ldots , A_k$ be finite, nonempty subsets of the integers and denote \begin{align*} S_i :&= A_1 + \ldots + \widehat{A_{i}} + \ldots + A_k \\ 
		&= A_1 + \ldots + A_{i-1} + A_{i+1} + \ldots + A_k\end{align*} Then $$|A_1 + \ldots + A_k| \geq \frac{1}{k-1}\left( (\sum_{i=1}^k |S_i| ) - 1\right).$$
\end{lem}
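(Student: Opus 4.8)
The plan is to prove Lemma~\ref{GYM} by a double-counting argument on a suitably chosen "sorted" copy of the sumset, a technique that reduces the general case to the trivial inequality~\eqref{trivial}. First I would fix a total order on $\mathbb{Z}$ and, for a putative representation $s = a_1 + \ldots + a_k$ with $a_j \in A_j$ of an element $s \in A_1 + \ldots + A_k$, consider the map sending $s$ to the tuple of partial information it determines; the key observation is that if we look at the largest element of $A_1 + \ldots + A_k$ with respect to the chosen order and strip off one summand at a time, we traverse a chain whose length is controlled by the $|S_i|$.

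More concretely, I would argue as follows. For each $i$, the sumset $A_1 + \ldots + A_k$ is contained in $S_i + A_i$, so by~\eqref{trivial} applied to $S_i$ and $A_i$ we already get $|A_1+\ldots+A_k|\geq |S_i|+|A_i|-1$; but this is weaker than what we want, so instead I would run the counting across all $i$ simultaneously. The cleanest route is to pick, for the maximal element $m = \max(A_1+\ldots+A_k)$, a representation $m = a_1^\star + \ldots + a_k^\star$, and then for each $i$ consider the injection $S_i \hookrightarrow A_1+\ldots+A_k$ given by $t \mapsto t + a_i^\star$. These $k$ injections have images that overlap only in a controlled way — any element lying in two of the images $S_i + a_i^\star$ and $S_j + a_j^\star$ yields a constrained representation — and a careful inclusion–exclusion over the $k$ images, together with the fact that they all sit inside a set containing $m$ as its maximum, produces the factor $\tfrac{1}{k-1}$ after rearranging. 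The "$-1$" comes from the single element $m$ being multiply counted.

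The step I expect to be the main obstacle is making the overlap bookkeeping precise: one needs that the $k$ translated copies $S_i + a_i^\star$ inside $A_1 + \ldots + A_k$ cover each point with multiplicity at most... well, genuinely one must show $\sum_i |S_i| = \sum_i |S_i + a_i^\star| \leq (k-1)|A_1+\ldots+A_k| + 1$, i.e. the average multiplicity is at most $k-1$ except for a deficit of $1$. The trick is that the point $m$ itself lies in \emph{all} $k$ translates (since $m - a_i^\star = a_1^\star + \ldots + \widehat{a_i^\star} + \ldots + a_k^\star \in S_i$), contributing multiplicity $k$ there; one then shows every other point has multiplicity at most... no — rather, one sets up the induction on $k$ using Lemma for $k-1$ applied to the sets after fixing the top element, which is exactly the mechanism the paper's overall induction already supplies. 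So in practice I would induct on $k$: peel off $A_k$ by writing the sumset as a union over $a_k \in A_k$ of translates of $A_1 + \ldots + A_{k-1}$, sort $A_k = \{c_1 < \ldots < c_n\}$, apply the $(k-1)$-variable statement to each layer, and sum the telescoping lower bounds, with the coprimality/nonemptiness ensuring no layer is wasted. The delicate calculation is checking that the arithmetic of the telescoped sum yields precisely $\tfrac{1}{k-1}\bigl((\sum_i |S_i|)-1\bigr)$ rather than something lossier, and I would leave that routine manipulation to the write-up.
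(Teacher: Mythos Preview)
The paper does not prove Lemma~\ref{GYM}; it quotes the result from \cite{Gy} and remarks that the proof there ``is elegant and only occupies half of a page.'' So there is nothing in the paper to compare your sketch against, and it has to stand on its own.

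It does not. Your one concrete mechanism --- translate each $S_i$ by the $i$th coordinate $a_i^\star$ of a representation of $m=\max(A_1+\cdots+A_k)$ and claim that every point of $T:=A_1+\cdots+A_k$ other than $m$ is covered with multiplicity at most $k-1$ --- is already false for $k=2$: with $A_1=A_2=\{0,1,2\}$ the two translates $S_1+a_1^\star$ and $S_2+a_2^\star$ are both $\{2,3,4\}$, so three points (not one) have multiplicity $k$. The desired inequality $\sum_i|S_i|\le(k-1)|T|+1$ still holds here, but not for the reason you give. You seem to sense trouble and retreat to ``induction on $k$ by peeling off $A_k$,'' but that is exactly where the content lies and your sketch does not supply it: the $(k{-}1)$-case applied to $A_1,\ldots,A_{k-1}$ controls the sets $A_1+\cdots+\widehat{A_i}+\cdots+A_{k-1}$, not the $S_i$ that appear in the $k$-statement, and layering translates of $S_k$ via \eqref{trivial} does not by itself recover the factor $\tfrac{1}{k-1}$. (Your aside about ``coprimality'' is also a red flag: this lemma concerns arbitrary finite nonempty subsets of $\mathbb{Z}$ and has nothing to do with the hypotheses of Theorem~\ref{main}.) What you have is a list of plausible-sounding moves, none of which closes; the ``routine manipulation'' you defer to the write-up is in fact the entire argument. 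Consult \cite{Gy} directly.
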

It is worthwhile to check that Theorem \ref{main} for $k=2$, proved in \cite{Ba}, combined with Lemma \ref{GYM}, quickly implies Theorem \ref{main} when $\lambda_1 , \ldots , \lambda_k$ are pairwise coprime, but does not immediately handle the general case. We remark that the proof of Lemma \ref{GYM}, which is Theorem 1.4 in \cite{Gy}, is elegant and only occupies half of a page. 

For each $1 \leq i \leq k$, denote $$g_{i} := (\lambda_1 , \ldots , \lambda_{i-1} , \lambda_{i+1} , \ldots , \lambda_k).$$  Note we have that $(\lambda_i , g_{i}) = 1$ for each $1 \leq i \leq k$. This implies for each $1 \leq i < j \leq k$ that $(g_{i} , g_{j}) = 1$.

We establish some notation. Given $q \in \mathbb{Z}$, we say $A$ is {\em fully distributed} (FD) mod $q$ if $A$ intersects every residue class mod $q$. 

Fix $1 \leq i \leq k$. We now partition $A$ into its residue classes mod $g_{i}$, that is $$A = \displaystyle \bigcup_{j = 1}^{m_i} A_{ij}, \ \  A_{ij} = a_{ij} +  g_{i} \cdot A'_{ij}, \ \ A_{ij} \neq \emptyset ,\ \  0 \leq a_{ij} < g_{i},$$ where the union is disjoint. Then we have $$|A| = \displaystyle \sum_{j=1}^{m_i} |A_{ij}|.$$ 

Furthermore, we have \begin{equation}\label{sum}|\lambda_1 \cdot A + \ldots + \lambda_k \cdot A| = \sum_{j_1 = 1}^{m_1} \ldots \sum_{j_k = 1}^{m_k} |\lambda_1 \cdot A_{1j_1}  + \ldots + \lambda_k \cdot A_{k,j_k}|.\end{equation}

Since it does not cause much inconvenience, we commit ourselves to calculating the additive constant in Theorem \ref{main} in terms of the additive constants coming from the sums of the $k-1$ dilates. We remind the reader we are using induction on $k$ and that for $k=1$, we may take the additive constant to be zero. Let \begin{equation}\label{const1} C' =g_{1} C_{\lambda_2 \ldots , \lambda_{k}}  + \ldots +  g_{k} C_{\lambda_1 , \ldots , \lambda_{k-1}} , \end{equation} be a certain linear combination of the additive constants obtained from all of the possible $k-1$ dilates. 

\begin{lem}\label{FD}
	Suppose $A$ is FD mod $g_{i}$ for all $1 \leq i \leq k$. Then $$|\lambda_1 \cdot A + \ldots + \lambda_k \cdot A| \geq (|\lambda_1| + \ldots + |\lambda_k|) |A| - (C' + |\lambda_1 \cdots \lambda_k|).$$
\end{lem}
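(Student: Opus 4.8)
The plan is to exploit the additive structure coming from the decomposition \eqref{sum} together with the inductive hypothesis on $k-1$ dilates. Fix an index $i$ and look at the partition $A = \bigcup_{j=1}^{m_i} A_{ij}$ into residue classes mod $g_i$. The crucial observation is that $\lambda_i$ is coprime to $g_i$, so multiplication by $\lambda_i$ permutes the residue classes mod $g_i$; consequently $\lambda_i\cdot A_{ij_i}$ meets a \emph{single} residue class mod $g_i$, and as $j_i$ ranges over $1,\dots,m_i$ these classes are distinct. On the other hand, for $\ell\ne i$ we have $g_i\mid g_\ell\cdot(\text{anything})$ is false in general, but $g_i\mid \lambda_\ell$ is \emph{not} what we want; rather the right statement is that $g_i = (\lambda_1,\dots,\widehat{\lambda_i},\dots,\lambda_k)$ divides each $\lambda_\ell$ for $\ell\ne i$, so $\lambda_\ell\cdot A_{\ell j_\ell}$ lies in a single residue class mod $g_i$ for every $\ell \ne i$. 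Therefore the summand $\lambda_1\cdot A_{1j_1}+\dots+\lambda_k\cdot A_{kj_k}$ lies in a single residue class mod $g_i$, and — since $A$ is FD mod $g_i$ — by varying $j_i$ alone (with the other $j_\ell$ fixed) these summands occupy $m_i$ distinct residue classes mod $g_i$ and so are pairwise disjoint. This lets me, for each fixed $i$, group \eqref{sum} into $g_i$ disjoint pieces.

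Concretely, I would argue as follows. For a fixed choice of $(j_\ell)_{\ell\ne i}$, the union $\bigcup_{j_i=1}^{m_i}\bigl(\lambda_1\cdot A_{1j_1}+\dots+\lambda_k\cdot A_{kj_k}\bigr)$ is a disjoint union whose size is therefore $\sum_{j_i}|\lambda_1\cdot A_{1j_1}+\dots+\lambda_k\cdot A_{kj_k}|$; but this union is also a subset of $(\text{single class mod }g_i) \cap \bigl(\lambda_i\cdot A + \sum_{\ell\ne i}\lambda_\ell\cdot A_{\ell j_\ell}\bigr)$. I want instead a \emph{lower} bound. Write $\lambda_i\cdot A$ inside the big sumset: summing \eqref{sum} first over $j_i$ shows
\[
|\lambda_1\cdot A+\dots+\lambda_k\cdot A| \;=\; \sum_{(j_\ell)_{\ell\ne i}} \Bigl|\, \lambda_i\cdot A \;+\; \sum_{\ell\ne i}\lambda_\ell\cdot A_{\ell j_\ell}\,\Bigr|,
\]
because the inner sets, for distinct tuples $(j_\ell)_{\ell\ne i}$, sit in distinct residue classes mod $g_i$ — no wait, that needs all $\ell\ne i$ to separate classes, which they do not. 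The correct route is the other grouping: fix $j_i$ and sum over $(j_\ell)_{\ell\ne i}$, giving $\sum_{j_i}\bigl|\lambda_i\cdot A_{ij_i} + \sum_{\ell\ne i}\lambda_\ell\cdot A\bigr|$; here for distinct $j_i$ the sets lie in distinct classes mod $g_i$, so this \emph{equals} $|\lambda_1\cdot A+\dots+\lambda_k\cdot A|$. Now apply the inductive hypothesis for $k-1$ dilates to $\lambda_i\cdot A_{ij_i} + \sum_{\ell\ne i}\lambda_\ell\cdot A$: one has
\[
\Bigl|\lambda_i\cdot A_{ij_i} + \textstyle\sum_{\ell\ne i}\lambda_\ell\cdot A\Bigr| \;\ge\; |\lambda_i|\,|A_{ij_i}| + \textstyle\bigl(\sum_{\ell\ne i}|\lambda_\ell|\bigr)|A| \;-\; C_{\lambda_1,\dots,\widehat{\lambda_i},\dots,\lambda_k},
\]
using that $\lambda_i\cdot A_{ij_i} = (\lambda_i a_{ij_i}) + (\lambda_i g_i)\cdot A'_{ij_i}$ and that $\lambda_i g_i$, together with the $\lambda_\ell$ ($\ell\ne i$), is still coprime — indeed $(\lambda_i g_i, g_i')$ where $g_i'=(\lambda_\ell:\ell\ne i)=g_i$... one checks the gcd of $\{\lambda_i g_i\}\cup\{\lambda_\ell:\ell\ne i\}$ is $g_i\cdot(\text{gcd of }\lambda_i\text{ and }1)=g_i$; dividing through by $g_i$ and translating (which does not change sumset cardinality) reduces to the coprime case for $k-1$ dilates with the same absolute values. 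Summing over $j_i$ gives $|\lambda_1\cdot A+\dots+\lambda_k\cdot A| \ge |\lambda_i||A| + (\sum_{\ell\ne i}|\lambda_\ell|)|A| - m_i C_{\lambda_1,\dots,\widehat{\lambda_i},\dots,\lambda_k}$, i.e.\ $\ge (\sum_\ell|\lambda_\ell|)|A| - m_i C_{\lambda_1,\dots,\widehat{\lambda_i},\dots,\lambda_k}$ — but this loses a factor $m_i$, which can be as large as $|A|$, so this crude bound is far too weak.

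To fix the loss, I would not apply the inductive hypothesis to the \emph{whole} $\sum_{\ell\ne i}\lambda_\ell\cdot A$ but keep the decomposition mod $g_i$ refined: the point of the FD hypothesis is that $m_i \le g_i$ is \emph{false} in general ($m_i$ is the number of \emph{occupied} classes, which is $g_i$ exactly when $A$ is FD), so in fact the FD assumption gives $m_i = g_i$ for every $i$. Then the bound above reads $|\lambda_1\cdot A+\dots+\lambda_k\cdot A| \ge (\sum_\ell|\lambda_\ell|)|A| - g_i\,C_{\lambda_1,\dots,\widehat{\lambda_i},\dots,\lambda_k}$ for \emph{each} $i$; averaging these $k$ bounds, or better, combining them via Lemma~\ref{GYM} applied to the $k$ sets $S_i = \sum_{\ell\ne i}\lambda_\ell\cdot A$ to squeeze out the $+\,|\lambda_1\cdots\lambda_k|$ improvement, should yield $|\lambda_1\cdot A+\dots+\lambda_k\cdot A| \ge (\sum_\ell|\lambda_\ell|)|A| - (C' + |\lambda_1\cdots\lambda_k|)$ with $C'$ exactly the linear combination \eqref{const1}. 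The main obstacle I anticipate is precisely the bookkeeping that turns the $k$ separate estimates with constants $g_i C_{\lambda_1,\dots,\widehat{\lambda_i},\dots,\lambda_k}$ into a single estimate with constant $C' + |\lambda_1\cdots\lambda_k|$ rather than $C'$ alone — extracting the extra $|\lambda_1\cdots\lambda_k|$ slack will require using that, after the mod-$g_i$ separation, there is still room in each residue class (a "fully distributed" gain at the level of the sumset), or a careful application of Lemma~\ref{GYM} together with the trivial bound \eqref{trivial}; getting the constant to come out exactly as claimed, rather than merely $O(C')$, is the delicate part.
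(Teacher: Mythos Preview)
Your proposal has a genuine gap. The identity
\[
|\lambda_1\cdot A+\dots+\lambda_k\cdot A| \;=\; \sum_{j_i=1}^{m_i}\Bigl|\lambda_i\cdot A_{ij_i} + \sum_{\ell\ne i}\lambda_\ell\cdot A\Bigr|
\]
is correct, but the inequality you then assert for each summand,
\[
\Bigl|\lambda_i\cdot A_{ij_i} + \textstyle\sum_{\ell\ne i}\lambda_\ell\cdot A\Bigr| \;\ge\; |\lambda_i|\,|A_{ij_i}| + \Bigl(\textstyle\sum_{\ell\ne i}|\lambda_\ell|\Bigr)|A| - C_{\lambda_1,\dots,\widehat{\lambda_i},\dots,\lambda_k},
\]
is not a consequence of the $(k-1)$-dilate hypothesis and is in fact false. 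It mixes dilates of two \emph{different} sets, $A_{ij_i}$ and $A$, whereas the induction hypothesis only bounds dilates of a single set. Concretely, take $k=2$, $\lambda_1=2$, $\lambda_2=3$, $A=\{0,1,\dots,5\}$, $i=1$, $A_{11}=\{0,3\}$: then the left side is $|\{0,6\}+\{0,3,\dots,15\}|=8$, while your right side is $2\cdot 2 + 3\cdot 6 - 0 = 22$. (There is also an arithmetic slip in your summation: summing $(\sum_{\ell\ne i}|\lambda_\ell|)|A|$ over $j_i$ produces a factor $m_i$ on that term, which would have revealed how strong the claimed bound is.) Your later suggestion to ``average'' the $k$ resulting bounds or invoke Lemma~\ref{GYM} on the $S_i$ does not repair this, because the per-term bound feeding into it is already wrong.

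The paper's proof uses Lemma~\ref{GYM} as the \emph{primary} engine, not as a final adjustment. One applies Lemma~\ref{GYM} directly to each summand $|\lambda_1\cdot A_{1j_1}+\dots+\lambda_k\cdot A_{kj_k}|$ in \eqref{sum}, obtaining for each a lower bound $\frac{1}{k-1}\bigl(\sum_i |S_i^{(j)}|-1\bigr)$ with $S_i^{(j)}=\sum_{\ell\ne i}\lambda_\ell\cdot A_{\ell j_\ell}$. Summing over all tuples $(j_1,\dots,j_k)$: for fixed $i$ the sum over $j_i$ contributes a factor $g_i$ (the summand is independent of $j_i$), and the remaining sum over $(j_\ell)_{\ell\ne i}$ is $\ge |\sum_{\ell\ne i}\lambda_\ell\cdot A|$ since the union of the $S_i^{(j)}$ covers this sumset. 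Now the $(k-1)$-dilate hypothesis applies cleanly to $|\sum_{\ell\ne i}\lambda_\ell\cdot A|$, giving $(s_{\lambda_i}/g_i)|A|-C_{\dots}$. The factor $\frac{1}{k-1}$ from Lemma~\ref{GYM} exactly cancels $\sum_i s_{\lambda_i}=(k-1)\sum_\ell|\lambda_\ell|$, and the accumulated $-1$'s give the $|\lambda_1\cdots\lambda_k|$ term. The point you were missing is that Lemma~\ref{GYM} must be applied \emph{inside} the decomposition \eqref{sum}, term by term, before any induction.
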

\begin{proof}
	Our assumption implies that for each $1 \leq i \leq k$, we have $m_i = g_{i}$.
	
	Set $$s_{\lambda_i} := |\lambda_1| + \ldots + |\lambda_{i-1}| + |\lambda_{i+1}| + \ldots + |\lambda_k|.$$
	
	Using \eqref{sum}, Lemma \ref{GYM}, and the induction hypothesis on $k$, we obtain 
	\begin{align*}
		|\lambda_1 \cdot A &+ \ldots + \lambda_k \cdot A|  = \sum_{j_1 =1}^{g_{1}} \cdots \sum_{j_k =1}^{g_{k}}|\lambda_1 \cdot A_{1j_1} + \ldots + \lambda_k \cdot A_{kj_k}| \\
		& \geq \sum_{j_1 =1}^{g_{1}} \cdots \sum_{j_k =1}^{g_{k}} \frac{1}{k-1} \left( (\sum_{i=1}^k |\lambda_1 \cdot A_{1j_1} + \ldots + \widehat{\lambda_i \cdot A_{ij_i}} + \ldots +\lambda_k \cdot A_{kj_k}| )- 1 \right) \\
		& \geq  \frac{1}{k-1} \sum_{i=1}^k g_{i} |\lambda_1 \cdot A + \ldots + \widehat{\lambda_i \cdot A} + \ldots +\lambda_k \cdot A| - \frac{1}{k-1} \prod_{i= 1}^k g_j \\
		& \geq  \frac{1}{k-1}\sum_{i=1}^k g_{i} \frac{s_{\lambda_i}}{g_{i}}|A| - \frac{1}{k-1} (C' + \prod_{i=1}^k g_{i}  ) \\
		& \geq (|\lambda_1| + \ldots + |\lambda_k|)|A|- \frac{1}{k-1} (C'+|\lambda_1 \cdots \lambda_k|) .
	\end{align*} 
\end{proof}


Lemma \ref{FD} proves Theorem \ref{main} in the case where $A$ is FD mod $g_{i}$ for all $1 \leq i \leq k$. 

Observe that translation and dilation of $A$ do not change $$|\lambda_1 \cdot A + \ldots + \lambda_k \cdot A|.$$ Every finite subset of the integers is equivalent, via an affine transformation, to a set which is not contained in any proper infinite arithmetic progression. We call such a set \emph{reduced} and we assume that $A$ is reduced. Thus we may and will assume that for all $1 \leq i \leq k$ and $1 \leq j \leq m_i$, we have that $$(a_{i1} - a_{ij} , a_{i2} - a_{ij}, \ldots , a_{im_i} - a_{ij} , g_{i}) = 1 .$$ Note that since $A$ is reduced, it must intersect at least two residue classes mod $g_{i}$, whenever $g_{i} >1$. We remark that more details about the reduction process can be found in \cite{Ba}. 

\begin{lem}\label{dist1}
	Fix $1 \leq i \leq k$ and then fix $1 \leq j \leq m_i$. Either $A'_{ij}$ is FD mod $g_{i}$ or \begin{align*}|\lambda_1 \cdot A + \ldots + \lambda_{i-1} \cdot A + \lambda_{i} & \cdot A_{ij} + \lambda_{i+1} \cdot A + \ldots + \lambda_k \cdot A| \\
		&\geq |\lambda_1 \cdot A_{ij} + \ldots + \lambda_k \cdot A_{ij}| + \min\limits_{1 \leq w \leq m_i} |A_{iw}|.\end{align*}
\end{lem}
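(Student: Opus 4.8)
The statement asks us to show that either the ``compressed'' set $A'_{ij}$ (the rescaled version of the $j$-th residue block of $A$ mod $g_i$) is fully distributed mod $g_i$, or else replacing $A$ by the single block $A_{ij}$ in the $i$-th coordinate of the $k$-fold dilate sum costs us at least $|\lambda_1 \cdot A_{ij} + \ldots + \lambda_k \cdot A_{ij}| + \min_w |A_{iw}|$ in size. The plan is to analyze the sum $\lambda_1 \cdot A + \ldots + \lambda_{i-1}\cdot A + \lambda_i \cdot A_{ij} + \lambda_{i+1}\cdot A + \ldots + \lambda_k \cdot A$ by splitting each of the other $k-1$ copies of $A$ into its residue classes mod $g_i$ as well, using the decomposition $A = \bigcup_{w} A_{iw}$ from the setup. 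Since $g_i \mid \lambda_t$ for every $t \neq i$, each term $\lambda_t \cdot A_{iw} = \lambda_t a_{iw} + \lambda_t g_i \cdot A'_{iw}$ lies in a single residue class mod $g_i^2$ (in fact all its elements are $\equiv \lambda_t a_{iw} \pmod{g_i^2}$), while $\lambda_i \cdot A_{ij}$ contributes $\lambda_i a_{ij} + \lambda_i g_i \cdot A'_{ij}$, and since $(\lambda_i, g_i) = 1$ the set $\lambda_i \cdot A'_{ij}$ hits the same residues mod $g_i$ that $A'_{ij}$ does. The key point is that the residue class of a summand $\lambda_1 \cdot A_{1,w_1} + \ldots + \lambda_i \cdot A_{ij} + \ldots + \lambda_k \cdot A_{k,w_k}$ mod $g_i$ is controlled, and we can track which blocks collide and which do not.

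\textbf{Key steps.} First I would fix the block $A_{ij}$ in the $i$-th slot and, for each other index $t \neq i$, let the $t$-th copy of $A$ range over its blocks $A_{iw}$, $1 \le w \le m_i$. This expresses the left-hand set as a disjoint-or-overlapping union over tuples $(w_t)_{t \neq i}$ of the sets $\lambda_i \cdot A_{ij} + \sum_{t \neq i} \lambda_t \cdot A_{iw_t}$. Second, I would isolate the ``diagonal'' tuple where every $w_t = j$: this contributes exactly a translate of $\lambda_1 \cdot A_{ij} + \ldots + \lambda_k \cdot A_{ij}$. Third — and this is where the dichotomy enters — I would examine how the remaining tuples contribute. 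Because $g_i \mid \lambda_t$ for $t \neq i$, modulo $g_i^2$ the sum $\sum_{t\neq i}\lambda_t \cdot A_{iw_t}$ depends only on the residues $a_{iw_t}$, so by choosing, say, a single index $t_0 \neq i$ and varying $w_{t_0}$ while keeping the others at $j$, we get a new ``cloud'' sitting in a residue class mod $g_i^2$ that is disjoint from the diagonal cloud's class precisely when $\lambda_{t_0}(a_{iw_{t_0}} - a_{ij})$ is nonzero mod $g_i^2$ — something we can arrange using that $A$ is reduced (so the $a_{iw}$ are not all equal) unless $A'_{ij}$ fails to be FD mod $g_i$ in the right way. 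In the non-FD case one exhibits an extra disjoint cloud of size $\ge \min_w |A_{iw}|$ coming from a block $A_{iw}$ whose rescaled version $A'_{iw}$ reaches a residue mod $g_i$ that $A'_{ij}$ misses; the coprimality $(\lambda_i,g_i)=1$ and $(g_i,g_j)=1$ guarantee the relevant residues mod $g_i^2$ genuinely separate, so this cloud adds without overlapping the diagonal part, giving the claimed lower bound. This mirrors the argument for $k=2$ in the reference \cite{Ba}.

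\textbf{Main obstacle.} The delicate point is the separation argument in the third step: one must correctly identify \emph{which} congruence condition mod $g_i^2$ (or possibly mod $g_i$ together with a finer invariant) distinguishes the ``FD'' case from the ``extra cloud'' case, and verify that the extra cloud — built from a block $A_{iw}$ that hits a new residue — is actually disjoint from everything already counted, including all the cross terms, not just from the diagonal term. Getting this right requires carefully using that $A$ is reduced (ensuring that the $a_{iw}$ differences together with $g_i$ have gcd $1$, so that non-full-distribution of $A'_{ij}$ is the only obstruction to producing separation) and that $(\lambda_i, g_i) = 1$ (so dilation by $\lambda_i$ permutes residues mod $g_i$ and does not destroy the distribution property). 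I expect the bookkeeping of residues mod $g_i$ versus mod $g_i^2$, and checking disjointness of the extra cloud from \emph{all} other contributions simultaneously, to be the part demanding the most care; the rest is a matter of summing sizes over the tuple decomposition.
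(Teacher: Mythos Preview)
Your residue framework is on the right track, but the mechanism you propose for producing the ``extra cloud'' is wrong, and this is a genuine gap. You say the cloud comes ``from a block $A_{iw}$ whose rescaled version $A'_{iw}$ reaches a residue mod $g_i$ that $A'_{ij}$ misses.'' But for every slot $t\neq i$ we have $g_i\mid\lambda_t$, so $\lambda_t\cdot A_{iw}=\lambda_t a_{iw}+\lambda_t g_i\cdot A'_{iw}$ is a \emph{single} residue class mod $g_i^2$ --- the residues of $A'_{iw}$ are wiped out entirely by the factor $\lambda_t g_i\equiv 0\pmod{g_i^2}$. So the residues of $A'_{iw}$ for $w\neq j$ play no role at all; what does vary as you change $(w_t)_{t\neq i}$ is only the scalar shift $\sum_{t\neq i}\lambda_t(a_{iw_t}-a_{ij})$. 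In particular there is no reason the full block $\lambda_i\cdot A_{ij}+\sum_{t\neq i}\lambda_t\cdot A_{iw_t}$ should be disjoint from the diagonal block: that would require the \emph{entire} shifted residue set $\lambda_i R+\text{shift}$ (where $R$ is the residue set of $A'_{ij}$ mod $g_i$) to miss $\lambda_i R$, and $|R|<g_i$ does not imply this --- the difference set $R-R$ can be all of $\mathbb{Z}/g_i$.

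The paper argues the contrapositive and this is where the real idea lies. Assuming the size gap is strictly less than $\min_w|A_{iw}|$, one fixes $h$ and $c\neq i$, replaces only the $c$-th slot by $A_{ih}$, and uses a pigeonhole count: for \emph{every fixed} $x\in\lambda_i\cdot A'_{ij}$, varying the remaining slots gives at least $|A'_{ih}|>\text{(gap)}$ elements, so at least one must land in the diagonal set. Reading residues mod $g_i$ this says: for every $x\in\lambda_i R$ there is $x'\in\lambda_i R$ with $x+\tfrac{\lambda_c}{g_i}(a_{ih}-a_{ij})\equiv x'\pmod{g_i}$. In other words, $\lambda_i R$ is \emph{closed} under the shift $\tfrac{\lambda_c}{g_i}(a_{ih}-a_{ij})$ for every $c\neq i$ and every $h$. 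Since $(\lambda_c/g_i)_{c\neq i}$ are coprime and (because $A$ is reduced) the $a_{ih}-a_{ij}$ together with $g_i$ have gcd $1$, these shifts generate $\mathbb{Z}/g_i$, forcing $\lambda_i R=\mathbb{Z}/g_i$ and hence $A'_{ij}$ FD mod $g_i$. The closure argument is the missing idea; your proposal reaches for a single separating residue class, which need not exist.
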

\begin{proof}
	We show the result only for the case $i=1$; the rest follows by symmetry. Suppose $$|\lambda_1 \cdot A_{1j} + \lambda_2 \cdot A + \ldots + \lambda_k \cdot A| - |\lambda_1 \cdot A_{1j} + \ldots + \lambda_k \cdot A_{1j}| < \displaystyle \min_{1 \leq w \leq m_1} |A_{1w}|.$$ Fix $1 \leq h \leq m_1$. Then using that $A_{1h} , A_{1j} \subset A$, we obtain $$ Q :=\left|\left(\lambda_1 \cdot A_{1j} + \ldots + \lambda_{k-1} \cdot A_{1j} + \lambda_k \cdot A_{1h}\right) \setminus (\lambda_1 \cdot A_{1j} + \ldots + \lambda_k \cdot A_{1j})\right|  < \displaystyle \min_{1 \leq w \leq m_1} |A_{1w}| .$$  Translation by $(\lambda_1 + \ldots + \lambda_k)(-a_{1j})$ and then dilation by $\frac{1}{g_{1}}$ reveals that $$Q = \left|\left(\frac{\lambda_k}{g_{1}}(a_{1h} - a_{1j}) + \lambda_1 \cdot A'_{1j} + \ldots + \lambda_{k-1} \cdot A'_{1j} + \lambda_k \cdot A'_{1h}\right) \setminus \left(\lambda_1 \cdot A'_{1j} + \ldots + \lambda_k \cdot A'_{1j}\right)\right|.$$ Fix $x \in \lambda_1 \cdot A'_{1j}$. Since $$Q < \displaystyle \min_{1 \leq w \leq m_i} |A'_{1w}| \leq |A'_{1h}| \leq | \lambda_2\cdot A_{1j}+\ldots +  \lambda_{k-1} \cdot A_{1j}+\lambda_k\cdot A_{1h}|,$$ we see that there exists a $$y \in \lambda_2 \cdot A'_{1j} + \ldots + \lambda_{k-1} \cdot A'_{1j} + \lambda_k \cdot A'_{1h},$$ such that $$\frac{\lambda_k}{g_{1}}(a_{1h} - a_{1j}) +  x + y \in \lambda_1 \cdot A'_{1j} + \ldots + \lambda_k \cdot A'_{1j}.$$ Thus there is a $x_2 \in \lambda_1 \cdot  A'_{1j}$ such that $\frac{\lambda_k}{g_{1}}(a_{1h} - a_{1j}) + x \equiv  x_2 \mod g_{1}$. We may repeat this argument with $x_2$ in the place of $x$, and so on, to obtain for any $v \in \mathbb{Z}$ there is a $x_3 \in \lambda_1 \cdot  A'_{1j}$ such that $v \frac{\lambda_k}{g_{1}}(a_{1h} - a_{1j}) + x \equiv x_3 \mod g_{1}$. We may also repeat this argument with $x_3$ in the place of $x$ and with $\lambda_c$ in the place of $\lambda_k$ for each $2 \leq c \leq k$. Since $(\frac{\lambda_2}{g_{1}} , \ldots , \frac{\lambda_k}{g_{1}}) = 1$, we have that there are integers $v_2 , \ldots , v_k$ such that $\sum_{c = 2}^k v_{c} \frac{\lambda_c}{g_{1}} = 1$. From this, we may infer there is an $x_4 \in \lambda_1 \cdot A'_{1j}$ such that $$(a_{1h} - a_{1j}) + x  = \sum_{c = 2}^k v_{c} \frac{\lambda_c}{g_{1}} (a_{1h} - a_{1j}) + x \equiv x_4 \mod g_{1}.$$ Now we may repeat this argument with $x_4$ in the place of $x$, and so on, and for each $1 \leq h \leq m_1$. We conclude for each $u_1 , \ldots , u_{m_1} \in \mathbb{Z}$, there is a $x_5 \in \lambda_1 \cdot A'_{1j}$ such that $$u_1(a_{11} - a_{1j}) + \ldots + u_{m_1}(a_{1m_1} - a_{1j}) + x \equiv x_5 \mod g_{1}.$$ Since $A$ is reduced, the set of possible values of $x_5$ meets every residue class mod $g_{1}$, so we find that $\lambda_1 \cdot A'_{1j}$ is FD mod $g_{1}$. Since $(\lambda_1 , g_{1}) = 1$, it follows that $A'_{1j}$ is FD mod $g_{1}$. 
\end{proof}

Let $p = \prod_{i=1}^k g_{i}$. We partition $A$ into its residue classes mod $p$, that is $$A = \displaystyle \bigcup_{e = 1}^{m} P_e, \ \  P_e = p_e +  p \cdot P'_e, \ \ P_e \neq \emptyset ,\ \  0 \leq p_e < p,$$ where the union is disjoint. Then we have $$|A| = \displaystyle \sum_{e=1}^{m} |P_e|.$$ 

For a fixed $1 \leq e \leq m$ and $1 \leq i \leq k$, let $e_i \in \{1 , \ldots , m_i\}$ such that $p_e \equiv a_{ie_i}$ mod $ g_{i}$. Clearly $P_e \subset A_{ie_i}$, and actually $$P_e=\bigcap_{i=1}^k A_{ie_i}.$$

\begin{lem}\label{dist2}
	Let $1 \leq e \leq m$. Suppose for all $1 \leq i \leq k$ that $A'_{ie_{i}}$ is FD mod $g_{i}$. Then either $P'_e$ is FD mod $g_{i}$ for all $1 \leq i \leq k$ or $$|\lambda_1 \cdot A_{1 e_{1}} + \ldots + \lambda_k \cdot A_{ke_k}| \geq |\lambda_1 \cdot P_e + \ldots + \lambda_k \cdot P_e| + |P_e|.$$
\end{lem}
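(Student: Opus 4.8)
The plan is to mimic the structure of Lemma \ref{dist1}, but now working modulo $p = \prod_{i=1}^k g_i$ rather than modulo a single $g_i$, and to exploit the Chinese Remainder Theorem together with the pairwise coprimality $(g_i, g_j) = 1$ established earlier. Suppose the displayed inequality fails, i.e.
\[
|\lambda_1 \cdot A_{1e_1} + \ldots + \lambda_k \cdot A_{ke_k}| - |\lambda_1 \cdot P_e + \ldots + \lambda_k \cdot P_e| < |P_e|.
\]
Since each $A_{ie_i} \supset P_e$, one can enlarge a single coordinate at a time: for any $1 \leq i \leq k$ and any element $b$ of $A_{ie_i}$, the set $\lambda_1 \cdot P_e + \ldots + \lambda_i \cdot \{b\} + \ldots + \lambda_k \cdot P_e$ adds fewer than $|P_e|$ new elements to $\lambda_1 \cdot P_e + \ldots + \lambda_k \cdot P_e$. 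After translating by $(\lambda_1 + \ldots + \lambda_k)(-p_e)$ and dilating by $1/p$, this becomes a statement about $P'_e$ and translates of the form $\frac{\lambda_i}{p}(b - p_e)$, where $b - p_e$ is a multiple of $g_i$ (since $b \in A_{ie_i}$) — but note that $b - p_e$ need not be a multiple of the other $g_j$, so the translate $\frac{\lambda_i}{p}(b-p_e)$ is generally a genuine non-integer fraction, and the deficiency hypothesis forces a coincidence modulo $p$.

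The key steps, in order, are: (1) fix $x \in \lambda_1 \cdot P'_e$; by the deficiency, $\frac{\lambda_i}{p}(b - p_e) + x$ must be congruent mod $p$ to some element $x' \in \lambda_1 \cdot P'_e$, because the set $\lambda_2 \cdot P'_e + \ldots + \widehat{\lambda_i \cdot P'_e} + \ldots + \lambda_k \cdot P'_e + \lambda_i \cdot \{(b-p_e)/g_i\}$ (or the analogous expression for $i=1$) is too large to avoid hitting $\lambda_1 \cdot P'_e + \ldots + \lambda_k \cdot P'_e$ in the relevant fiber, exactly as in Lemma \ref{dist1}; (2) iterate to get that any integer multiple $v \cdot \frac{\lambda_i}{p}(b - p_e)$ added to $x$ lands back in $\lambda_1 \cdot P'_e$ mod $p$; (3) run this over all $i$ and all $b$ in each $A_{ie_i}$; (4) use that $A$ is reduced — so that for each fixed $i$ the differences $a_{i1} - a_{ie_i}, \ldots, a_{im_i} - a_{ie_i}$ together with $g_i$ have gcd $1$, equivalently the differences $b - p_e$ over $b \in A_{ie_i}$ generate $g_i \mathbb{Z}/ (g_i^2)$-type information — more precisely, since $A'_{ie_i}$ is FD mod $g_i$ by hypothesis, the elements $(b - p_e)/g_i$ already meet every residue class mod $g_i$, so integer combinations of the $\frac{\lambda_i}{p}(b-p_e)$ generate the subgroup $\frac{g_i}{p}\mathbb{Z} \pmod p$, which is the unique subgroup of $\mathbb{Z}/p\mathbb{Z}$ of order $p/g_i$; (5) do this for every $i$ and observe that the subgroups of orders $p/g_1, \ldots, p/g_k$ together generate all of $\mathbb{Z}/p\mathbb{Z}$, precisely because the $g_i$ are pairwise coprime so that $\gcd(p/g_1, \ldots, p/g_k) = 1$. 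Hence the attainable values of $x'$ meet every residue class mod $p$, i.e.\ $\lambda_1 \cdot P'_e$ is FD mod $p$; since $(\lambda_1, p) = 1$ (as $\lambda_1$ is coprime to every $g_i$), $P'_e$ is FD mod $p$, and a fortiori FD mod each $g_i$.

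The main obstacle I anticipate is step (4)–(5): bookkeeping exactly which subgroup of $\mathbb{Z}/p\mathbb{Z}$ is generated at each stage, and checking that the hypothesis ``$A'_{ie_i}$ is FD mod $g_i$'' is precisely strong enough to upgrade ``$b - p_e \in g_i\mathbb{Z}$'' into ``the $(b-p_e)/g_i$ fill all of $\mathbb{Z}/g_i\mathbb{Z}$'', so that the generated subgroup of $\mathbb{Z}/p\mathbb{Z}$ is the full index-$g_i$ one rather than something smaller. Once that is in hand, the coprimality of the $g_i$ closes the argument immediately; the analytic part (the counting that produces the forced congruence in step (1)) is essentially identical to Lemma \ref{dist1} and should go through verbatim after relabeling $g_1 \rightsquigarrow p$ and $A'_{1j} \rightsquigarrow P'_e$.
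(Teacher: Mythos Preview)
Your step (1) has a genuine gap. After dilating by $1/p$, you want the deficiency to force a congruence $\frac{\lambda_i}{p}(b-p_e)+x\equiv x'\pmod p$ with $x,x'\in\lambda_1\cdot P'_e$; but isolating slot $1$ modulo $p$ would require the remaining summands $\lambda_j\cdot P'_e$ (for $j\ge 2$) to lie in a single residue class mod $p$, and they do not --- each $\lambda_j$ is divisible only by $p/g_j$, not by $p$. The intersection produced by the deficiency therefore yields at best a congruence modulo $g_i$ (when slot $i$ is the one enlarged), never modulo $p$, so the subgroup calculus in $\mathbb{Z}/p\mathbb{Z}$ of steps (4)--(5) cannot get started. (A side remark: contrary to what you write, $\frac{\lambda_i}{p}(b-p_e)$ is in fact always an integer, since $\prod_{j\ne i}g_j$ divides $\lambda_i$ and $g_i$ divides $b-p_e$.)

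The paper sidesteps all of this by treating one $g_i$ at a time: it dilates by $1/g_i$ rather than $1/p$, enlarges only the $i$th slot to $A_{ie_i}$, and then --- crucially --- varies the element $a'\in A'_{ie_i}$ rather than fixing $x\in\lambda_1\cdot P'_e$ and iterating as in Lemma~\ref{dist1}. For each $a'$ the deficiency forces $\lambda_i\,\frac{a_{ie_i}-p_e}{g_i}+\lambda_i a'\equiv d\lambda_i p'\pmod{g_i}$ for some $p'\in P'_e$ (with $d=p/g_i$); since $A'_{ie_i}$ is FD mod $g_i$ by hypothesis and $(\lambda_i,g_i)=1$, the left-hand side already meets every residue class, hence so does $d\lambda_i\cdot P'_e$, and since $(d\lambda_i,g_i)=1$ one concludes $P'_e$ is FD mod $g_i$ in a single stroke. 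No iteration and no Chinese Remainder Theorem are needed; the FD hypothesis on $A'_{ie_i}$ is precisely what replaces the reduced-set iteration that drove Lemma~\ref{dist1}.
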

\begin{proof}
	Suppose $$|\lambda_1 \cdot A_{1 e_{1}} + \ldots + \lambda_k \cdot A_{k,e_k}| -|\lambda_1 \cdot P_e + \ldots + \lambda_k \cdot P_e| < |P_e|.$$ We show that $P'_e$ is FD mod $g_{1}$ and the rest will follow by symmetry. Using that $P_e \subset A_{1e_2} , \ldots , A_{1e_k}$, we find that $$Q :=|(\lambda_1 \cdot A_{1 e_{1}} + \lambda_2 \cdot P_e + \ldots + \lambda_k \cdot P_e) \setminus (\lambda_1 \cdot P_e + \ldots + \lambda_k \cdot P_e)| < |P_e|.$$ Let $d := p/g_{1}$. Translation by $(\lambda_1 + \ldots + \lambda_k)(-p_e)$ and then dilation by $\frac{1}{g_{1}}$ reveals $$Q = |\lambda_1 \frac{(a_{1e_1} - p_e)}{g_{1}} + \lambda_1 \cdot A'_{1 e_{1}} + d\lambda_2 \cdot P'_e + \ldots + d\lambda_k \cdot P'_e) \setminus (d\lambda_1 \cdot P'_e + \ldots + d\lambda_k \cdot P'_e)| .$$ Observe that $\lambda_1 \frac{(a_{1e_1} - p_e)}{g_{1}}$ is an integer. Fix $a' \in A'_{1e_1}$. Since $Q < |P'_e|$, there is a $y \in d\lambda_2 \cdot P'_e + \ldots + d\lambda_k \cdot P'_e$ such that $$\lambda_1 \frac{(a_{1e_1} - p_e)}{g_{1}} + \lambda_1 a' + y \in d\lambda_1 \cdot P'_e + \ldots + d\lambda_k \cdot P'_e.$$ Thus we may find an $p' \in P'_e$ such that $\lambda_1 \frac{(a_{1e_1} - p_e)}{g_{1}} +\lambda_1 a' \equiv d \lambda_1 p' \mod g_{1}$. Since $(\lambda_1 , g_{1}) =1$, this set of $\lambda_1 \frac{(a_{1e_1} - p_e)}{g_{1}} +\lambda_1 a'$ meets every residue class mod $g_{1}$. It follows that $d \lambda_1 \cdot P'_e$ is FD mod $g_{1}$. But $(d\lambda_1 , g_{1}) = 1$ and so $P'_e$ is FD mod $g_{1}$.

\end{proof}


We informally sketch the argument of our next proposition, Proposition \ref{mainprop}, which will immediately imply Theorem \ref{main}. We shall prove a sequence of estimates of the form $|\lambda_1 \cdot A+...+\lambda_k\cdot A|\geq M|A|-C$ for various values of $M$ and $C$. We start with the trivial inequality $| \lambda_1 \cdot A+...+\lambda_k\cdot A| \geq |A|$, and then successively improve the multiplicative constant $M$, only asking to increase it by $\frac{1}{|\lambda_1| + \ldots + |\lambda_k|}$ in each step. The proof breaks into four cases:

\begin{itemize}
	\item[(i)]\ \  If some $A_{ij}$ is small, then we use \eqref{trivial},
	\item[(ii)] \ \  If some $A_{ij}$ is not FD mod $g_{i}$, then use Lemma \ref{dist1} and $A_{ij}$ is big from $(i)$,
	\item[(iii)] \ \ If $P'_e$ is not FD mod $g_{i}$, then use Lemma \ref{dist2} and $A_{ie_i}$ is FD mod $g_{i}$ from $(ii)$,
	\item[(iv)] \ \  If $P'_e$ is FD mod $g_{i}$ for all $1 \leq i \leq k$ and use Lemma \ref{FD}.
\end{itemize}
The first three cases use the previous estimates of $|\lambda_1 \cdot A + \ldots + \lambda_k \cdot A| \geq M|A|-C$ for various choices of $A$. Note that $(iv)$ is what prevents us from improving upon the multiplicative constant beyond $|\lambda_1| + \ldots + |\lambda_k|$.

Let \begin{equation}\label{const2} C'' = \frac{1}{k-1} (|\lambda_1 \cdots \lambda_k|)( C_{\lambda_2 \ldots , \lambda_{k}}  + \ldots +  C_{\lambda_1 , \ldots , \lambda_{k-1}} + 1) \end{equation} which is a modest upper bound for $\frac{1}{k-1}(C' + |\lambda_1 \cdots \lambda_k|)$ where $C'$ is defined in \eqref{const1}. 

\begin{prop}\label{mainprop}
	Let $\lambda_1 , \ldots , \lambda_k \in \mathbb{Z}$ be coprime, $A \subset \mathbb{Z}$ be finite. Then for all $|\lambda_1| + \ldots+  |\lambda_k| \leq u \leq (|\lambda_1| + \ldots + |\lambda_k|)^2$, $$|\lambda_1 \cdot A + \ldots + \lambda_k \cdot A| \geq \frac{u}{|\lambda_1| + \ldots +  |\lambda_k|}|A| - C''(|\lambda_1 |\ldots|\lambda_k|)^u ,$$ where $C''$ is defined in \eqref{const2}. 
\end{prop}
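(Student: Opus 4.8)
The plan is to prove Proposition~\ref{mainprop} by induction on $u$, following exactly the four-case structure sketched before the statement. The base case is $u = |\lambda_1| + \ldots + |\lambda_k|$, where the claimed bound reduces to the trivial inequality $|\lambda_1 \cdot A + \ldots + \lambda_k \cdot A| \geq |A|$, which holds since $C''(|\lambda_1|\cdots|\lambda_k|)^u > 0$. For the inductive step, fix $u$ with $|\lambda_1| + \ldots + |\lambda_k| < u \leq (|\lambda_1| + \ldots + |\lambda_k|)^2$, assume the estimate holds for $u-1$ (for \emph{every} finite $A \subset \mathbb{Z}$ and every coprime tuple, so that we may apply it to subsets like $A_{ij}$ and $P_e$), and deduce the estimate for $u$. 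Since translation and dilation leave $|\lambda_1 \cdot A + \ldots + \lambda_k \cdot A|$ unchanged, we may assume $A$ is reduced. Write $L = |\lambda_1| + \ldots + |\lambda_k|$ for brevity.

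The core is a case analysis. First, if some $A_{ij}$ satisfies $|A_{ij}|$ is ``small'' — the natural threshold is roughly $C''(|\lambda_1|\cdots|\lambda_k|)^u / |A|$ worth of slack, more precisely something like $|A_{ij}| \leq C''(|\lambda_1|\cdots|\lambda_k|)^{u-1}(L-1)$ — then I would peel off the decomposition $|\lambda_1 \cdot A + \ldots + \lambda_k \cdot A| = \sum_{j_1,\ldots,j_k} |\lambda_1 \cdot A_{1j_1} + \ldots + \lambda_k \cdot A_{kj_k}|$ from \eqref{sum}, group terms to isolate the contribution of the small class, apply \eqref{trivial} there and the induction hypothesis at $u-1$ on the remaining piece, and check the constants balance. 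Second, if all $A_{ij}$ are large but some $A'_{ij}$ fails to be FD mod $g_i$, then Lemma~\ref{dist1} gives a gain of $\min_w |A_{iw}|$ — which is large by the previous case — when comparing the sum over $A$ to the sum over $A_{ij}$; combined with the induction hypothesis at $u-1$ applied to $A_{ij}$ this upgrades the multiplicative constant by the required $1/L$. Third, if all $A'_{ij}$ are FD but some $P'_e$ fails to be FD mod some $g_i$, Lemma~\ref{dist2} gives a gain of $|P_e|$, and we argue as in case two but now relative to $P_e$. Fourth, if every $P'_e$ is FD mod every $g_i$, then since $A = \bigcup_e P_e$ with $P_e = p_e + p\cdot P'_e$ and each dilated translate $p \cdot P'_e$ is handled by Lemma~\ref{FD} (applied to $P'_e$, which is FD mod each $g_i$), we sum the bound from Lemma~\ref{FD} over $e$ and compare constants with $C''$; this is the step that caps the multiplicative constant at exactly $L/L = 1$ times $u = L^2$, i.e. at $L$, and shows why the proposition's range of $u$ stops at $L^2$.

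The delicate part — and the main obstacle — is bookkeeping the additive constants through all four cases so that the final constant is at most $C''(|\lambda_1|\cdots|\lambda_k|)^u$, given that at step $u-1$ we only have $C''(|\lambda_1|\cdots|\lambda_k|)^{u-1}$. The point is that the small/large threshold in case (i) must be chosen of size comparable to $C''(|\lambda_1|\cdots|\lambda_k|)^{u-1}$ times a bounded factor, and then the number of ``small-class'' terms that get the weak bound \eqref{trivial} is controlled by the $g_i$'s (at most $p = \prod g_i$ many relevant classes), so the total loss is a bounded multiple of $C''(|\lambda_1|\cdots|\lambda_k|)^{u-1}$, and the factor $|\lambda_1|\cdots|\lambda_k| \geq \prod g_i^{1/(k-1)}$ (or a cruder comparison) absorbs it into $C''(|\lambda_1|\cdots|\lambda_k|)^u$. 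I expect the cleanest route is: choose the threshold as $\tau := C''(|\lambda_1|\cdots|\lambda_k|)^{u-1}$, verify in each of cases (i)–(iii) that (new multiplicative constant, new additive constant) $= (M + 1/L,\ \text{something} \leq C''(|\lambda_1|\cdots|\lambda_k|)^u)$ by a direct computation using $u-1 \geq L$, and in case (iv) note Lemma~\ref{FD} already gives multiplicative constant $L = u/L \cdot L$... wait, $u = L^2$ there, so $u/L = L$, matching, with additive constant $C' + |\lambda_1\cdots\lambda_k| = (k-1)C''$, which is comfortably below $C''(|\lambda_1|\cdots|\lambda_k|)^u$ for any $u \geq 1$.

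Two secondary points worth flagging. First, in cases (i)–(iii) one applies the induction hypothesis at the \emph{same} value $u-1$ to sets that are genuinely smaller than $A$ (e.g.\ $A_{ij}$, $P_e$), so there is no circularity; one must be careful that when $M$ is already close to $u/L$ the gain of one ``min class'' of size $\geq \tau$ times $1/|A|$-worth really does push past the next multiple of $1/L$, which it does precisely because $\tau$ was calibrated to $C''(|\lambda_1|\cdots|\lambda_k|)^{u-1}$. Second, the coprimality hypotheses $(\lambda_i, g_i) = 1$ and the pairwise coprimality $(g_i,g_j)=1$, already recorded in the excerpt, are exactly what make the $P_e = \bigcap_i A_{ie_i}$ identity and the ``$d\lambda_i$ coprime to $g_i$'' steps in Lemmas~\ref{dist1} and~\ref{dist2} valid, so no new number theory is needed beyond invoking those lemmas.
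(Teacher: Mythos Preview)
Your four-case outline matches the paper's, and the induction on $u$ with base $u=S$ is exactly right. But the choice of the ``small'' threshold in case~(i) is the wrong one, and this breaks case~(ii). You propose $\tau = C''(|\lambda_1|\cdots|\lambda_k|)^{u-1}$, a constant independent of $|A|$. In case~(ii) the only gain Lemma~\ref{dist1} hands you is $\min_w |A_{iw}|$, and if all you know from case~(i) is $\min_w |A_{iw}|>\tau$, then after applying the induction hypothesis to both $A_{ij}$ and $A\setminus A_{ij}$ you get at best
\[
\frac{u-1}{S}|A| + \tau - 2C''(|\lambda_1|\cdots|\lambda_k|)^{u-1},
\]
which cannot exceed $\frac{u}{S}|A| - C''(|\lambda_1|\cdots|\lambda_k|)^{u}$ once $|A|$ is large: a constant gain never buys an increment of $1/S$ in the multiplicative constant. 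The paper's threshold is $\frac{1}{S}|A|$, i.e.\ a \emph{fraction of $|A|$}. Then in case~(ii) one has $\min_w|A_{iw}|\ge \frac{1}{S}|A|$, and this is exactly the $\frac{1}{S}|A|$ needed to push the multiplicative constant from $\frac{u-1}{S}$ to $\frac{u}{S}$; the additive constants then take care of themselves via $2C_u\le C_{u+1}$. So the threshold is there to feed case~(ii), not to balance the additive bookkeeping.

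A second, smaller point: you describe cases~(iii) and~(iv) as a global dichotomy (``some $P'_e$ not FD'' versus ``every $P'_e$ FD''). The paper instead runs the dichotomy \emph{for each $e$ separately}: for each $e$, either Lemma~\ref{dist2} gives $|\lambda_1\cdot A_{1e_1}+\ldots+\lambda_k\cdot A_{ke_k}|\ge \frac{u+1}{S}|P_e|-C_u$, or Lemma~\ref{FD} applied to $P'_e$ gives $|\lambda_1\cdot P_e+\ldots+\lambda_k\cdot P_e|\ge S|P_e|-C''\ge \frac{u+1}{S}|P_e|-C_u$; then one sums over $e$ using \eqref{sum} (dropping the non-$P_e$ terms) and absorbs $mC_u\le |\lambda_1\cdots\lambda_k|\,C_u=C_{u+1}$. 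A global version of case~(iii) would fail for the same reason as above: a single $|P_e|$ may be $1$, so gaining $|P_e|$ once does nothing for the multiplicative constant over all of $A$.
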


\begin{proof}
	
	For convenience, let $S = |\lambda_1| + \ldots+ |\lambda_k|$ and $C_u =  (|\lambda_1 |\ldots|\lambda_k|)^u$. We induct on $u$, where the inequality $|\lambda_1 \cdot A + \ldots + \lambda_k \cdot A| \geq |A|$ starts the induction for $u = S$. 
	
	First, assume there is a $1 \leq j \leq m_1$ such that $|A_{1j}| \leq \frac{1}{S} |A|$. Then using the induction hypothesis and \eqref{trivial} we obtain
	\begin{align*}
		|\lambda_1& \cdot A + \ldots+ \lambda_k \cdot A| \\
		&\geq |\lambda_1 \cdot A_{1j} + \lambda_2 \cdot A+ \ldots + \lambda_k \cdot A| + |\lambda_1 \cdot (A \setminus A_{1j}) + \ldots+ \lambda_k \cdot (A \setminus A_{1j})| \\
		&\geq |A_{1j}| + |A| - 1 +  \frac{u}{S} (|A| - |A_{1j}|) - C_u \\
		& \geq \frac{u+1}{S} |A| - C_{u+1},
	\end{align*}
	using in the last step that $u < S^2$ and $ C_u + 1 \leq C_{u+1}$. By the symmetry of $\lambda_1 , \ldots , \lambda_k$, we may assume, for every $1 \leq i \leq k$ and $1 \leq j \leq m_i$, that $A_{ij}$ has more than $\frac{1}{S}|A|$ elements. 
	
	Suppose there is some $1 \leq j \leq m_1$ such that $A'_{1j}$ that is not FD mod $g_{1}$. Then by Lemma \ref{dist1} and the induction hypothesis
	
	\begin{align*}
		&|\lambda_1 \cdot A + \ldots + \lambda_k \cdot A| \\
		&\geq |\lambda_1 \cdot A_{1j} + \lambda_2 \cdot A + \ldots + \lambda_k \cdot A| + |\lambda_1 \cdot (A \setminus A_{1j}) + \ldots+ \lambda_k \cdot (A \setminus A_{1j})| \\
		& \geq |\lambda_1 \cdot A_{1j} + \ldots  + \lambda_k \cdot A_{1j}| + \min_{1 \leq w \leq m_1} |A_{1w}| + \frac{u}{S}(|A| - |A_{1j}|) - C_u \\
		& \geq \frac{u}{S} |A_{1j}| - C_u + \frac{1}{S} |A|  + \frac{u}{S}(|A| - |A_{1j}|) - C_u \\
		& \geq \frac{u+1}{S} |A| - C_{u+1},
	\end{align*}
	using $C_{u+1} \geq 2 C_u$ in the last step. By the symmetry of $\lambda_1 , \ldots , \lambda_k$, we may now assume, for each $1 \leq i \leq k$ and for each $1 \leq j \leq m_i$, that $A'_{ij}$ is FD mod $g_{i}$. 
	
	Fix $1 \leq e \leq m$. Then using Lemma \ref{dist2}, we obtain either \begin{align*} |\lambda_1 \cdot A_{1e_1} + \ldots +  \lambda_k \cdot A_{ke_k}| &\geq |\lambda_1 \cdot P_e + \ldots+ \lambda_k \cdot P_e| + |P_e| \\ 
		& \geq \frac{u}{S} |P_e| - C_u + |P_e| \\
		&\geq \frac{u+1}{S} |P_e| - C_u,\end{align*} by the induction hypothesis or that $P'_e$ is FD mod $g_{i}$ for all $1 \leq i \leq k$. In the latter case, by Lemma \ref{FD}, using that $u < S^2$, we have $$|\lambda_1 \cdot P_e + \ldots + \lambda_k \cdot P_e| = |\lambda_1 \cdot P'_e + \ldots + \lambda_k \cdot P'_e| \geq S|P'_e| - C'' \geq \frac{u+1}{S} |P_e| - C_u,$$ where $C''$ is defined in \eqref{const2}. 
	In either case we obtain $$|\lambda_1 \cdot A_{1e_1}+ \ldots + \lambda_k \cdot A_{ke_k}| \geq \frac{u+1}{S} |P_e| - C_u.$$ Using \eqref{sum}, we find
	
	\begin{align*}
		|\lambda_1 \cdot A + \ldots + \lambda_k \cdot A| &= \sum_{j_1 = 1}^{m_1} \ldots \sum_{j_k = 1}^{m_k} |\lambda_1 \cdot A_{1j_1}  + \ldots + \lambda_k \cdot A_{k,j_k}|\\
		& \geq \sum_{e=1}^m \left(\frac{u+1}{S} |P_e| - C_u\right) \geq \frac{u+1}{S} |A| - C_{u+1}
	\end{align*}
	since $C_{u+1} \geq |\lambda_1 \cdots \lambda_k| C_u \geq  mC_u$.
\end{proof}

Allowing $u = (|\lambda_1| + \ldots + |\lambda_k|)^2$ in Proposition \ref{mainprop} yields Theorem \ref{main}, where we may take 
\begin{align*} C &= \frac{1}{k-1}(C_{\lambda_2 , \ldots , \lambda_k} + \ldots + C_{\lambda_1 + \ldots + \lambda_{k-1}} + 1)|\lambda_1 \cdots \lambda_k|^{(|\lambda_1 \cdots \lambda_k|)^2 + 1} \\
	&\leq \frac{1}{3} {k+1 \choose 2} |\lambda_1 \cdots \lambda_k|^{(k-1)(|\lambda_1| + \ldots + |\lambda_k|)^2 + k-1} .\end{align*}

The author would like to thank Antal Balog for not only providing useful suggestions for this paper, but introducing him to the current problem, as well as the beautiful subject of additive combinatorics. Also, the author would like to thank the referee for their useful suggestions. 

Written while the author enjoyed the hospitality of the Alfr\'{e}d R\'{e}nyi 
Institute of Mathematics, and benefited from the OTKA grant K 109789.


\begin{thebibliography}{9}
	
	\bibitem{Ba} Balog A. and Shakan G. (2014) On the sum of dilations of a set. {\em Acta Arithmetica} {\bf 164} 153--162.
	
	\bibitem{Bu} Bukh B. (2008) Sums of Dilates. {\em Combinatorics, Probability and Computing} {\bf 17} 627--639.
	
	\bibitem{Ci} Cilleruelo J., Hamidoune Y. and O. Serra (2009) On sums of dilates. {\em Combinatorics, Probability and Computing} {\bf 18} 871--880.
	
	\bibitem{Du} Du S.\,S, Cao  H.\,Q. and Sun Z.\,W. (2014) On a sumset problem for integers. {\em Electronic Journal of Combinatorics} {\bf 21} 1--25.
	
	
	\bibitem{Gy} Gyarrmarti K., Ruzsa I. and Matolcsi M. (2010) A superadditivity and submultiplicativity property for cardinalities of sumsets. {\em Combinatorica} {\bf 30} 163--174.
	
	\bibitem{Ha} Hamidoune Y. and Ru\'{e} J. (2011) A lower bound for the size of a Minkowski sum of dilates. {\em Combinatorics, Probability and Computing} {\bf 20} 249--256.
	
	\bibitem{Ko} Konyagin S. and {\L}aba I. (2006) Distance sets of well-distributed planar sets for polygonal norms.
	{\em Israel J. Math} {\bf 152} 157--179.
	
	\bibitem{Lj} Ljuji\'{c} Z. (2013) A lower bound for the size of a sum of dilates. {\em Journal of Combinatorial Number Theory} {\bf 5} 31--51.
	
	\bibitem{Pl} Plagne A. (2011) Sums of dilates in groups of prime order. {\em Combinatorics, Probability and Computing} {\bf 20} 867--873.
	
	\bibitem{Po} Pontiveros G. (2013) Sum of dilates in $\mathbb{Z}_p$. {\em Combinatorics, Probability and Computing} {\bf 22} 282--293. 
	
	
	\bibitem{Ta} Tao T. and Vu V. {\em Additive combinatorics}, Cambridge University Press, 2006.
\end{thebibliography}
\end{document}